\def\be{\begin{equation}}
\def\ee{\end{equation}}
\def\beaN{\setlength{\arraycolsep}{0.0em}\begin{eqnarray*}}
\def\eeaN{\end{eqnarray*}\setlength{\arraycolsep}{5pt}}
\def\bea{\setlength{\arraycolsep}{0.0em}\begin{eqnarray}}
\def\eea{\end{eqnarray}\setlength{\arraycolsep}{5pt}}
\def\dm{n}
\def\ZZ{\mathbb{Z}}
\def\be{\begin{equation}}
\def\ee{\end{equation}}
\def\dm{n}
\def\ZZ{\mathbb{Z}}
\def\bks{\backslash}
\def\ome{\omega}                
\title{Generalizations of the Maillet Determinant} 
\author{Youngmi Hur\footnotemark[2]\ \footnotemark[3] \and Zachary Lubberts\footnotemark[3]}
\begin{document}
\maketitle
\renewcommand{\thefootnote}{\fnsymbol{footnote}}
\footnotetext[2]{Department of Mathematics, Yonsei University, Seoul 120-749, Korea.}
\footnotetext[3]{Department of Applied Mathematics and Statistics, Johns Hopkins University, Baltimore, MD 21218, USA.}
\renewcommand{\thefootnote}{\arabic{footnote}}
\footnotetext[1]{This research was partially supported by Yonsei New Faculty Research Seed Money Grant and NSF Grant DMS-1115870.}

\begin{abstract}
We consider several extensions of the Maillet determinant studied by Malo, Turnbull, and Carlitz and Olson, and derive properties of the underlying matrices. In particular, we compute the eigenvectors and eigenvalues of these matrices, which yield formulas for these new determinants. 
\end{abstract}

\begin{keywords}
Maillet determinant, circulant matrix, Latin square matrix
\end{keywords}

\begin{AMS}11C20, 15B36\end{AMS}

\pagestyle{myheadings}
\thispagestyle{plain}
\markboth{Youngmi Hur and Zachary Lubberts
}{Generalizations of the Maillet Determinant}

\section{Introduction}

For an integer \(p>1\), we denote by \(\ZZ_{p}\) the ring of integers mod \(p\), and by \(a\,(\ZZ_{p})\) the remainder of the division of \(a\) by \(p\), i.e., for \(a=qp+r\), where \(q\in\ZZ\) and \(0\leq r<p\), \(a \,(\ZZ_{p})=r\). That is, \(a\,(\ZZ_{p})\) denotes the element \(r\) of \(\{0,1,\ldots,p-1\}\) such that \(a+p\ZZ=r+p\ZZ\). Then the Maillet determinant is defined for any odd prime \(p\) as 
$$\det\left[(i^{-1}\cdot j\,(\ZZ_{p}))\right]_{i,j=1}^{(p-1)/2},$$ where \(i^{-1}\) denotes the multiplicative inverse of \(i\) in \(\ZZ_{p}\). This determinant is mentioned in \cite{TM} as an interesting determinant and according to \cite{HWT}, E. Malo conjectured in 1913 that its value is given by the formula
$$(-p)^{{1\over 2}(p-3)}.$$
In \cite{CO},  L. Carlitz and F.R. Olson (see also \cite{CO2}) showed that the above form suggested by Malo is not exactly correct, but the determinant has the form
$$\pm p^{{1\over 2}(p-3)}k_p$$ 
with an explicitly given nonzero integer $k_p$ (that depends on $p$), which in turn implies that the determinant is always nonzero. Rather than studying the determinant of some variation of this \((p-1)/2\) by \((p-1)/2\) matrix, however, we consider instead for any odd prime \(p\) and positive integer \(m\) the determinant of the \((p-1)\) by \((p-1)\) matrix \[A_{p,m}:=\left[(i^{-1}\cdot j\,(\ZZ_{p}))^{m}\right]_{i,j=1}^{p-1}\] where the exponentiation outside of the parentheses is taken as in the integers. Then the first principal \((p-1)/2\) by \((p-1)/2\) submatrix of \(A_{p,1}\) is exactly the matrix giving rise to the Maillet determinant, and for the first few choices of \(p\) (i.e. for \(p=3,5,7\)), the matrices \(A_{p,m}\) are:
$$
\left[\begin{array}{cc} 
1&2^m\\ 
2^m&1
\end{array}\right],\quad
\left[\begin{array}{cccc} 
1&2^m&3^m&4^m\\ 
3^m&1&4^m&2^m\\
2^m&4^m&1&3^m\\
4^m&3^m&2^m&1
\end{array}\right],\quad
\left[\begin{array}{cccccc} 
1&2^m&3^m&4^m&5^m&6^m\\ 
4^m&1&5^m&2^m&6^m&3^m\\
5^m&3^m&1&6^m&4^m&2^m\\
2^m&4^m&6^m&1&3^m&5^m\\
3^m&6^m&2^m&5^m&1&4^m\\
6^m&5^m&4^m&3^m&2^m&1
\end{array}\right].
$$

In Section~\ref{S:prop}, we begin by proving several properties of the matrices \(A_{p,m}\) and their determinants; we then show that \(A_{p,m}\) is permutation similar to a circulant matrix, and discuss some properties of \(A_{p,m}\) that are implied by this fact before giving an explicit formula for the eigenvalues and eigenvectors of \(A_{p,m}\), which follows from the corresponding formulas for circulant matrices. This in turn yields a formula for the determinant of \(A_{p,m}\), an application of which we give in a lemma concerning the approximation power of the output of a new multidimensional wavelet construction (see Lemma~\ref{lem:pcs}), before concluding with a generalization of the matrices \(A_{p,m}\) to which our results can be applied (\S\ref{gen}). In particular, we will show that for \(m=1\), the determinant of \(A_{p,m}\) is always zero, and that for sufficiently large \(m\) it is always nonzero; it is our belief that the determinant of \(A_{p,m}\) never vanishes for \(m\geq 2\), though the proof eludes us.

Throughout the paper, we will denote the transpose of a matrix \(A\) by \(A^{T}\), and its conjugate transpose by \(A^{*}\).

\section{Properties of $A_{p,m}$}
\label{S:prop}
\subsection{Basic properties of $A_{p,m}$}
Each row (or column) of the matrix $A_{p,m}$ can be obtained by using a permutation operator from any other row (or column). Such a matrix is called a {\it Latin square} matrix (see, for example, \cite{KWJ} and references therein). However, the matrices $A_{p,m}$ have much more structure than the usual Latin square matrices. For example, every positive diagonal entry ($(i,i)$ for $i=1,\cdots, p-1$) is $1$ because $A_{p,m}(i,i)=(i^{-1}\cdot i\, (\ZZ_{p}))^m=1^m,$ and every anti-diagonal entry ($(i,p-i)$ for $i=1,\cdots,p-1$) is $(p-1)^m$ because $A_{p,m}(i,p-i)=(i^{-1}\cdot (p-i)\, (\ZZ_{p}))^m,$ and
$$i^{-1}\cdot (p-i) \,(\ZZ_{p})=i^{-1}\cdot p-i^{-1}\cdot i\,(\ZZ_{p})=p-1.$$

The matrix $A_{p,m}$ is {\it centrosymmetric} for every odd prime $p$ and positive integer $m$. To see why this is so, let us first recall that a matrix is called centrosymmetric if it is symmetric about its center, i.e. for even \(n\), if an $n\times n$ matrix $A$ satisfies
$A(i,j)=A(n-i+1,n-j+1)$ for $1\le i,j\le n$ (see, for example, \cite{ALA} for the definition). Now let us show that the matrix $A_{p,m}$ is centrosymmetric. For \(1\leq i\leq p-1\), we observe that \((p-i)^{-1}(\ZZ_{p})=p-i^{-1}(\ZZ_{p})\), since \[(p-i^{-1})(p-i)(\ZZ_{p})=i^{-1}\cdot i\,(\ZZ_{p})=1.\] Then for \(1\leq i,j\leq p-1\), \begin{eqnarray*}A_{p,m}(i,j)&=&i^{-1}\cdot j\,(\ZZ_{p})=(p-i^{-1})\cdot (p-j)(\ZZ_{p})\\&=&(p-i)^{-1}\cdot (p-j)(\ZZ_{p})=A_{p,m}(p-i,p-j),\end{eqnarray*} which proves that \(A_{p,m}\) is centrosymmetric.

Let us summarize what we know so far about the matrices $A_{p,m}$.
\begin{lemma} \label{lem:basic}
Let $A_{p,m}$ be defined as above for some odd prime $p$ and positive integer \(m\). Then
\begin{romannum}
\item $A_{p,m}$ is a Latin square matrix.
\item $A_{p,m}$ is centrosymmetric.
\item $A_{p,m}(i,i)=1$ and $A_{p,m}(i,p-i)=(p-1)^m$, for all $i=1,\dots,p-1$.
\end{romannum} 
\end{lemma}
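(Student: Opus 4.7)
The plan is straightforward: all three items can be verified by short computations in $\ZZ_p$, and parts (ii) and (iii) have effectively been established already in the discussion preceding the lemma statement. What remains is to organize these arguments into a unified proof.

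First, I would dispatch part (iii) by direct computation. Since $i^{-1}\cdot i\,(\ZZ_p) = 1$, and since $i^{-1}(p-i) \equiv -1 \pmod{p}$ gives $i^{-1}(p-i)\,(\ZZ_p) = p-1$, raising each to the $m$-th power yields the stated diagonal and anti-diagonal entries.

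For part (ii), the key identity is $(p-i)^{-1}\,(\ZZ_p) = p - i^{-1}\,(\ZZ_p)$, which I would verify via the short computation $(p-i^{-1})(p-i) \equiv i^{-1}\cdot i \equiv 1 \pmod{p}$. Combining this with $(p-i^{-1})(p-j) \equiv i^{-1} j \pmod{p}$ then gives $A_{p,m}(p-i, p-j) = A_{p,m}(i,j)$, establishing centrosymmetry.

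For part (i), I would show that any two rows of $A_{p,m}$ differ by a column permutation. For rows indexed by $i$ and $i'$, the identity $(i')^{-1} j \equiv i^{-1}\bigl(i(i')^{-1} j\bigr) \pmod{p}$, together with the fact that $i(i')^{-1}$ is a unit in $\ZZ_p$ — so the map $j \mapsto i(i')^{-1} j\,(\ZZ_p)$ is a bijection on $\{1,\ldots,p-1\}$ — exhibits row $i'$ as a column permutation of row $i$. The analogous statement for columns follows from the symmetric role of $i$ and $j$ in the defining expression $i^{-1} j$. The main obstacle here is nothing more than careful bookkeeping of modular arithmetic; no substantive analytic or combinatorial difficulty arises.
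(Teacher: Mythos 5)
Your proposal is correct and follows essentially the same route as the paper, whose proof of this lemma is the computation in the discussion immediately preceding its statement: the identity $(p-i)^{-1}\,(\ZZ_p)=p-i^{-1}\,(\ZZ_p)$ for centrosymmetry and the direct evaluation of the diagonal and anti-diagonal entries. Your part (i) is slightly more explicit than the paper, which merely asserts the row/column permutation property, but the underlying argument (multiplication by a unit of $\ZZ_p$ permutes $\{1,\dots,p-1\}$) is the same.
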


Because $A_{p,m}$ is centrosymmetric, it can be written \cite{ACA} as
\begin{equation}
\label{eq:centro}
\left[\begin{array}{cc} 
B_{p,m}&JC_{p,m}J\\ 
C_{p,m}&JB_{p,m}J
\end{array}\right]
\end{equation}
where $B_{p,m}$ is the leading principal submatrix of \(A_{p,m}\) of order \((p-1)/2\), \(C_{p,m}\) is also a \((p-1)/2\) by \((p-1)/2\) submatrix, and $J$ is the the reversal matrix of the appropriate size; in this case, the $(p-1)/2$ by $(p-1)/2$ matrix of the form 
$$
\left[\begin{array}{ccccc} 
0&0&\cdots&0&1\\ 
0&0&\cdots&1&0\\ 
\vdots&\vdots&\vdots&\vdots&\vdots\\ 
0&1&\cdots&0&0\\ 
1&0&\cdots&0&0
\end{array}\right].
$$
Furthermore it is known (see \cite{MU}) that $A_{p,m}$ (cf.~(\ref{eq:centro})) is orthogonally similar to 
\begin{equation}\label{eq:centrosimilar}\left[\begin{array}{cc} 
B_{p,m}-JC_{p,m}&0\\ 
0&B_{p,m}+JC_{p,m}
\end{array}\right]\end{equation}

We know that $B_{p,m}(i,j)=(i^{-1}\cdot j\,(\ZZ_{p}))^m$ for $1\le i,j\le (p-1)/2$, and observe that for any matrix $A$, $JA$ is obtained from $A$ by flipping the order of the rows of $A$. Therefore $JC_{p,m}(i,j)=((p-i)^{-1}\cdot j\,(\ZZ_{p}))^m$, and recalling that \((p-i)^{-1}(\ZZ_{p})=p-i^{-1}(\ZZ_{p})\) (as shown above), we have that 
if $B_{p,m}(i,j)=l^m$ for some integer $l$ between $1$ and $p-1$, $JC_{p,m}(i,j)=(p-l)^m$, for every $1\le i,j\le (p-1)/2$. 

Using the properties of $A_{p,m}$ given above, among others, we get the following information about the determinant of $A_{p,m}$.
\begin{lemma}\label{lem:detinv}
Let $A_{p,m}$ be defined as above for odd prime $p$ and positive integer \(m\). Then
\begin{romannum}
\item
$\det(A_{p,m})\,(\ZZ_{4})= 0$ for $p\ge 5$.
\item
$\det A_{3,m}=1-4^m$. Hence $A_{3,m}$ is invertible for every $m\geq 1$.
\item
$\det(A_{p,m})\,(\ZZ_{p})= 0$ for every odd prime $p$ and every $m\geq 1$.
\item
$\det A_{p,1}=0$ for every prime $p\ge 5$. Hence $A_{p,1}$ is not invertible for such \(p\).
\item
$\det A_{p,m}\ne 0$ if \((p-1)^{m}-\sum_{k=1}^{p-2}k^{m}>0\). Thus, for any fixed odd prime $p$, $\det A_{p,m}$ is nonzero for sufficiently large $m$. In particular, $\det A_{p,m}$ is nonzero if $m\ge \log(p-2)\left/\log\left(\frac{p-1}{p-2}\right)\right.$. \end{romannum}
\end{lemma}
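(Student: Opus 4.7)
The strategy is to treat each of the five assertions by isolating the structural feature of \(A_{p,m}\) that most directly yields it. Parts (ii) and (iii) are essentially immediate. Part (ii) is the \(2\times 2\) determinant \(1-(2^{m})^{2}\), which is plainly nonzero for every \(m\geq 1\). For part (iii), reduce modulo \(p\): since the integer \(i^{-1}(\ZZ_{p})\) is congruent to the multiplicative inverse of \(i\) in \(\ZZ_{p}\), we have \(A_{p,m}(i,j)\equiv i^{-m}j^{m}\pmod{p}\). Thus modulo \(p\), \(A_{p,m}\) is the outer product of \((i^{-m})_{i}\) and \((j^{m})_{j}\), a rank-one matrix, and since \(p-1\geq 2\) its determinant vanishes mod \(p\).

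For part (i), I would use the orthogonal similarity~(\ref{eq:centrosimilar}), which gives the factorization \(\det A_{p,m}=\det(B_{p,m}-JC_{p,m})\cdot\det(B_{p,m}+JC_{p,m})\). As noted in the text preceding the lemma, if \(B_{p,m}(i,j)=l^{m}\) then \(JC_{p,m}(i,j)=(p-l)^{m}\) for the same \(l\). Since \(p\) is odd, exactly one of \(l\) and \(p-l\) is even, so \(l^{m}\pm(p-l)^{m}\) is odd for each entry. Hence both \(B_{p,m}\pm JC_{p,m}\) reduce modulo \(2\) to the all-ones matrix of size \((p-1)/2\), whose determinant is \(0\bmod 2\) as soon as \((p-1)/2\geq 2\), i.e., as soon as \(p\geq 5\). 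Each factor is therefore even and the product is divisible by \(4\).

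Parts (iv) and (v) both exploit an elementary identity: \((p-i)^{-1}\cdot j\,(\ZZ_{p})=p-i^{-1}\cdot j\,(\ZZ_{p})\). For part (iv), this identity says that row \(i\) plus row \(p-i\) of \(A_{p,1}\) is the constant vector \(p(1,1,\dots,1)\), independent of \(i\). For \(p\geq 5\) there are at least two distinct values of \(i\in\{1,\dots,(p-1)/2\}\), so differencing two such row pairs produces a nontrivial linear relation and forces \(\det A_{p,1}=0\). For part (v), permute rows by \(i\mapsto p-i\); by the same identity the diagonal entries of the permuted matrix become \(((p-i)^{-1}\cdot i\,(\ZZ_{p}))^{m}=(p-1)^{m}\), while each row is still a permutation of \(\{1^{m},\dots,(p-1)^{m}\}\), so the off-diagonal row sum is \(\sum_{k=1}^{p-2}k^{m}\). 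The hypothesis \((p-1)^{m}>\sum_{k=1}^{p-2}k^{m}\) is precisely strict diagonal dominance, so the Levy--Desplanques theorem yields invertibility. The explicit condition follows from the crude bound \(\sum_{k=1}^{p-2}k^{m}\leq(p-2)^{m+1}\) together with the equivalence \((p-1)^{m}\geq(p-2)^{m+1}\Leftrightarrow m\geq\log(p-2)/\log((p-1)/(p-2))\). The only step that needs any real care is the mod \(2\) bookkeeping in (i); everything else is a short direct computation once the right row-sum identity is in hand.
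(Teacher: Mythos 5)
Your proof is correct and follows essentially the same route as the paper's for all five parts: the centrosymmetric factorization plus odd-entry parity for (i), reduction to a rank-one matrix mod $p$ for (iii), the constant row-pair sums for (iv), and diagonal dominance of $JA_{p,m}$ for (v). The only cosmetic differences are that in (i) you conclude via the all-ones matrix mod $2$ rather than counting the parity of the $\left(\frac{p-1}{2}\right)!$ terms in the permutation expansion, and in (iv) you work with row pairs of the full matrix rather than the constant block $B_{p,1}+JC_{p,1}$; both variants are equally valid.
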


{\em Proof: }
\begin{romannum}
\item From ~(\ref{eq:centrosimilar}), we know that $\det A_{p,m}=\det(B_{p,m}-JC_{p,m})\det(B_{p,m}+JC_{p,m})$. If we write each of \(\det(B_{p,m}-JC_{p,m})\) and \(\det(B_{p,m}+JC_{p,m})\) according to the permutation definition of the determinant (i.e., for an \(n\times n\) matrix \(A\), \(\det(A)=\sum_{\sigma\in S_n}\textrm{sgn}(\sigma)\prod_{k=1}^{n}A(k,\sigma(k))\), where $S_n$ is the set of all permutations of the $n$ items $\{1,\dots,n\}$, cf. \cite{HJ}), we observe that all entries of these matrices are odd (by the explicit computation of the entries of \(B_{p,m}\) and \(JC_{p,m}\) after~(\ref{eq:centrosimilar})), so each term in the sum is odd, and since there are \((\frac{p-1}{2})!\) permutations, their sum is even. Then both factors of \(\det A_{p,m}\) are even, so \(\det A_{p,m}\) is divisible by 4.
\item Straightforward.
\item We see this by considering \(A_{p,m}(\ZZ_{p})\), and observe that each row is a scalar multiple \((i^{-1}(\ZZ_{p}))^{m}(\ZZ_{p})\) times the first row, so \(A_{p,m}\) is not invertible when considered as a matrix over \(\ZZ_{p}\), and its determinant must be \(np\) for some \(n\in\ZZ\). 
\item
From the computation after (\ref{eq:centrosimilar}), it is easy to see that when $m=1$, we have 
$$B_{p,1}(i,j)+JC_{p,1}(i,j)=p,\quad \forall i,j=1,\dots,(p-1)/2.$$
Thus for every $p\ge 5$, $\det(B_{p,1}+JC_{p,1})=0$, and $\det A_{p,1}=0$.
\item
Since the latter parts of the statement are straightforward, we show the first statement only.
For the $(p-1)$ by $(p-1)$ reversal matrix $J$, the matrix $JA_{p,m}$ is diagonally dominant if \((p-1)^{m}-\sum_{k=1}^{p-2}k^{m}>0\). Thus $\det JA_{p,m}\ne 0$ (see \cite{HJ}), which in turn implies that $\det A_{p,m}\ne 0$. 
\qquad\endproof
\end{romannum}

\subsection{$A_{p,m}$ and circulant matrices}
We recall that for an odd prime \(p\), a nonzero element $h$ of $\ZZ_{p}$ is called {\it primitive} if its powers generate the multiplicative group of \(\ZZ_{p}\), i.e., if \(\{h^{k}(\ZZ_{p}):1\leq k\leq p-1\}=\{1,2,\ldots,p-1\}\). It is well known that a primitive always exists in $\ZZ_{p}$, and, in fact, that there are Euler's totient function, \(\phi(p-1)\), many primitive elements of \(\ZZ_{p}\) (see, for example, \cite{SL}). 
Our next theorem says that $A_{p,m}$ is similar to a circulant matrix via a permutation matrix, which is defined using a primitive of $\ZZ_p$.

\begin{theorem} 
\label{thm:main}
Let \(h\) be a primitive of \(\ZZ_{p}\). Then \(A_{p,m}\) is permutation-similar to a circulant matrix whose first row is $[1,(h(\ZZ_{p}))^{m}, (h^2(\ZZ_{p}))^{m}, \dots,(h^{p-2}(\ZZ_{p}))^{m}]$. 
\end{theorem}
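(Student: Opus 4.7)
The idea is that when we reindex the rows and columns by powers of the primitive element $h$, the multiplicative structure of $\ZZ_p^*$ becomes additive, which is precisely what is needed for a circulant matrix.

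Concretely, I would define the permutation $\pi$ on $\{1,\dots,p-1\}$ by
\[
\pi(k) := h^{k-1}\,(\ZZ_p),\qquad k=1,\dots,p-1.
\]
Since $h$ is a primitive of $\ZZ_p$, the set $\{h^{k-1}(\ZZ_p):1\le k\le p-1\}$ equals $\{1,2,\dots,p-1\}$, so $\pi$ is indeed a bijection. Let $P$ be the associated permutation matrix, defined by $Pe_k=e_{\pi(k)}$, so that $(P^T A_{p,m}P)(i,j)=A_{p,m}(\pi(i),\pi(j))$ for all $1\le i,j\le p-1$.

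Next I would compute this conjugated matrix entrywise. Using $(h^{a}h^{b})(\ZZ_p)=h^{a+b}(\ZZ_p)$ together with the fact that $\pi(i)^{-1}(\ZZ_p)=h^{-(i-1)}(\ZZ_p)$, we get
\[
(P^{T}A_{p,m}P)(i,j)=\left(\pi(i)^{-1}\cdot\pi(j)\,(\ZZ_p)\right)^{m}=\left(h^{(j-i)\bmod(p-1)}(\ZZ_p)\right)^{m},
\]
where the reduction $\bmod\,(p-1)$ is legitimate because $h$ has order $p-1$ in the multiplicative group of $\ZZ_p$. This shows that the $(i,j)$-entry depends only on $(j-i)\bmod(p-1)$, which is exactly the defining property of a circulant matrix, and that its $k$-th entry in the first row (i.e.\ $i=1$, $j=k+1$) is $(h^{k}(\ZZ_p))^{m}$, matching the stated first row.

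The argument is essentially a direct computation once the correct relabeling is in place, so the only real substance is the choice of $\pi$; the primitivity of $h$ is used in two places: to ensure $\pi$ is a bijection, and to guarantee that the exponent $(j-i)$ can be reduced modulo $p-1$ without changing $h^{j-i}(\ZZ_p)$. No serious obstacle should arise; the main care is merely bookkeeping the convention for the permutation matrix (so that conjugation by $P$ relabels rows and columns in the same way) and verifying that the indexing $k=1,\dots,p-1$ versus $k=0,\dots,p-2$ is consistent between $\pi$ and the first-row listing $[1,(h(\ZZ_p))^m,\dots,(h^{p-2}(\ZZ_p))^m]$.
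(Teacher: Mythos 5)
Your proof is correct and follows essentially the same route as the paper: conjugate $A_{p,m}$ by the permutation induced by the discrete logarithm base $h$, compute the entries to get $(h^{j-i}(\ZZ_p))^m$, and conclude circulant structure. The only differences are cosmetic (an index shift $h^{k-1}$ versus $h^k$, and invoking the ``depends only on $(j-i)\bmod(p-1)$'' characterization directly rather than checking Toeplitz plus wrap-around as the paper does).
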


\begin{proof}
Denoting the standard unit vectors (as column vectors) by \(e_{j}\), \(1\leq j\leq p-1\), let \(P\) be the permutation matrix that sends \(e_{(h^{j}(\ZZ_{p}))}\) to \(e_{j}\) for each \(1\leq j\leq p-1\). Explicitly, \(P(i,j)=1\) if \(j=h^{i}(\ZZ_{p})\), and is 0 otherwise. Then \begin{eqnarray*}(PA_{p,m}P^{T})(i,j)&=&\sum_{k=1}^{p-1}(PA_{p,m})(i,k)P^{T}(k,j)=\sum_{k=1}^{p-1}A_{p,m}(h^{i}(\ZZ_{p}),k)P(j,k)\\&=&A_{p,m}(h^{i}(\ZZ_{p}),h^{j}(\ZZ_{p})),\end{eqnarray*} which is \((h^{j-i}(\ZZ_{p}))^{m}\) by the definition of \(A_{p,m}\). Since this depends only on the value of \(j-i\), we see that \(PA_{p,m}P^{T}\) is Toeplitz; but since \(h^{p-1-i}(\ZZ_{p})=h^{-i}(\ZZ_{p})=h^{1-(i+1)}(\ZZ_{p})\), \((PA_{p,m}P^{T})(i,p-1)=(PA_{p,m}P^{T})(i+1,1)\) for each \(1\leq i\leq p-2\), which proves that \(PA_{p,m}P^{T}\) is in fact a circulant matrix. 
\qquad\end{proof}

For example, let us consider the case when $p=5$. Then there are exactly two primitives, namely $2$ and $3$, in $\ZZ_5$. Following the arguments in the proof of Theorem~\ref{thm:main}, we see that the permutation matrices for $h=2$ and $h=3$ are 
\begin{equation}
\label{eq:p5permutation}
\left[\begin{array}{cccc}0&1&0&0\\0&0&0&1\\0&0&1&0\\1&0&0&0\end{array}\right],\quad\left[\begin{array}{cccc}0&0&1&0\\0&0&0&1\\0&1&0&0\\1&0&0&0\end{array}\right],
\end{equation}
respectively, and  the circulant matrices corresponding to these are those with the first rows \([1\;2^{m}\;4^{m}\;3^{m}]\), and \([1\;3^{m}\;4^{m}\;2^{m}]\), respectively. In other words, $A_{5,m}$ is permutation-similar to ${\rm Circ} (1,2^{m},4^{m},3^{m})$ via the first permutation matrix in (\ref{eq:p5permutation}) and is permutation-similar to ${\rm Circ} (1,3^{m},4^{m},2^{m})$ via the second permutation matrix in (\ref{eq:p5permutation}), where ${\rm Circ}(v)$ is defined as the circulant matrix which has \(v\) as its first row.

In fact, \(A_{p,m}\) is permutation similar to more than one circulant matrix for any \(p\geq 5\), since distinct primitives of \(\ZZ_{p}\) give rise to different corresponding permutation matrices and circulant matrices, as we will soon show. Since there are Euler's totient function, \(\phi(p-1)\), many primitive elements of \(\ZZ_{p}\) (\cite[pg. 94]{SL}), the latter statement actually implies that there are at least \(\phi(p-1)\) different circulant matrices to which \(A_{p,m}\) is permutation similar, and actually this is the exact number.\footnote{One way to prove this is to show that for any \(n\times n\) circulant matrix \(C\) and permutation matrix \(P\) such that, for \(\sigma\) a permutation of \(\{0,\ldots,n-1\}\), \(Pe_{\sigma(j)+1}=e_{j+1}\) for each \(0\leq j\leq n-1\), \(PCP^{T}\) is circulant if and only if \(\sigma\) is a homomorphism of \(\{0,\ldots,n-1\}\), considered as an additive cyclic group with generator 1. In this case, \(\sigma\) is an automorphism, and the automorphisms of a cyclic group of order \(n\) are isomorphic to the multiplicative group of \(\ZZ_{n}\) \cite{SL}, which has order \(\phi(n)\).} 

To see why \(A_{p,m}\) is permutation similar to more than one circulant matrix for any \(p\geq 5\), let us briefly use the notation \(\tensor*[^{(h)}]{P}{}\) to denote the permutation matrix obtained using \(h\) as the primitive as in the proof of Theorem~\ref{thm:main}. Observe that \(\left(\tensor*[^{(h)}]{P}{}A_{p,m}\tensor*[^{(h)}]{P}{^{T}}\right)(i,j)=(h^{j-i}(\ZZ_{p}))^{m}\), so that \(\left(\tensor*[^{(h)}]{P}{}A_{p,m}\tensor*[^{(h)}]{P}{^{T}}\right)(1,2)=h^{m}\); then by supposition, for different primitive elements \(h,\,h'\), the circulant matrices to which \(A_{p,m}\) is similar via \(\tensor*[^{(h)}]{P}{}\) and \(\tensor*[^{(h')}]{P}{}\) will differ in their \((1,2)\) entry. Note also that for \(h\) a primitive element of \(\ZZ_{p}\), \(h^{-1}(\ZZ_{p})\) is also a primitive of \(\ZZ_{p}\) but it is different from $h$ for $p\ge 5$. Hence, if we use \(h'=h^{-1}(\ZZ_{p})\) to create the permutation matrix inducing the similarity rather than \(h\), the resulting circulant matrix will be the transpose of that obtained using \(h\); i.e., 
\begin{eqnarray*}
\left(\tensor*[^{(h)}]{P}{}A_{p,m}\tensor*[^{(h)}]{P}{^{T}}\right)(i,j)&\,{=}\,&(h^{j-i}(\ZZ_{p}))^{m}=((h^{-1})^{i-j}(\ZZ_{p}))^{m}\\
&=&\left(\tensor*[^{(h')}]{P}{}A_{p,m}\tensor*[^{(h')}]{P}{^{T}}\right)(j,i)=\left(\tensor*[^{(h')}]{P}{}A_{p,m}\tensor*[^{(h')}]{P}{^{T}}\right)^{T}(i,j).
\end{eqnarray*} 
The example above illustrates this point as well, since we see that \(3=2^{-1}(\ZZ_{5})\) and \(\textrm{Circ}(1,2^m,4^m,3^m)^{T}=\textrm{Circ}(1,3^m,4^m,2^m)\).

We now list some corollaries of Theorem~\ref{thm:main}. Let us start with an immediate one.
\begin{corollary}\label{cor:norm}
$A_{p,m}$ is normal.
\end{corollary}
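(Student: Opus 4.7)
The plan is to deduce normality of $A_{p,m}$ directly from Theorem~\ref{thm:main} by exploiting two facts: every circulant matrix is normal, and normality is preserved under unitary similarity (in particular, under permutation similarity, since a permutation matrix $P$ satisfies $P^{T}=P^{-1}=P^{*}$).

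First I would invoke Theorem~\ref{thm:main} to write $A_{p,m}=P^{T}CP$ for some permutation matrix $P$ and some circulant matrix $C$ (whose explicit first row is given there, although the precise entries are irrelevant for this corollary). Taking conjugate transposes yields $A_{p,m}^{*}=P^{T}C^{*}P$, using $P^{*}=P^{T}$. Then I would compute
\begin{eqnarray*}
A_{p,m}A_{p,m}^{*}&=&P^{T}CP\,P^{T}C^{*}P=P^{T}CC^{*}P,\\
A_{p,m}^{*}A_{p,m}&=&P^{T}C^{*}P\,P^{T}CP=P^{T}C^{*}CP,
\end{eqnarray*}
where I used $PP^{T}=I$. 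So the claim reduces to $CC^{*}=C^{*}C$.

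The remaining step is the well-known fact that any circulant matrix is normal: indeed, all $n\times n$ circulant matrices are simultaneously diagonalized by the (unitary) discrete Fourier transform matrix, so in particular each individual circulant matrix $C$ is unitarily diagonalizable and therefore normal. Substituting $CC^{*}=C^{*}C$ into the expressions above gives $A_{p,m}A_{p,m}^{*}=A_{p,m}^{*}A_{p,m}$, completing the proof. There is no real obstacle here; the corollary is essentially a one-line consequence of Theorem~\ref{thm:main} combined with the standard diagonalization of circulants.
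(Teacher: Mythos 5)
Your proof is correct and takes essentially the same route as the paper: both deduce normality from Theorem~\ref{thm:main} by combining the unitarity of permutation matrices with the unitary diagonalizability (hence normality) of circulant matrices. You merely write out the conjugation computation explicitly where the paper invokes preservation of normality under unitary similarity.
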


\begin{proof}
Since permutation matrices are unitary, $A_{p,m}$ is unitarily similar to a circulant matrix, as we saw in Theorem~\ref{thm:main}, and circulant matrices are unitarily diagonalizable. Finally, the product of two unitary matrices is unitary.
\qquad\end{proof}

We recall that circulant matrices commute because any \(n\times n\) circulant matrix is a polynomial in either the \(n\times n\) forward shift permutation matrix or equivalently, the backward shift permutation matrix (\(\textrm{Circ}(e_{2}^{T})\) or respectively \(\textrm{Circ}(e_{n}^{T})\)), the powers of which form a cyclic group of order \(n\). In light of this and Theorem~\ref{thm:main}, it comes as no surprise that the matrices \(A_{p,m}\) are actually polynomials in a single permutation matrix: \begin{corollary}\label{cor:poly}
Let $h$ be a primitive of $\ZZ_{p}$, and let $Q$ be the permutation matrix such that \(Q(i,j)=1\) if \(A_{p,m}(i,j)=h^{m}\), and is 0 otherwise. Explicitly, \(Q(i,j)=\delta(j,i\cdot h\,(\ZZ_{p}))\), where \(\delta(i,j)=1\) if \(i=j\) and is 0 otherwise. Then $$A_{p,m}=\sum_{k=1}^{p-1}(h^{k}(\ZZ_{p}))^{m}Q^{k}.$$
\end{corollary}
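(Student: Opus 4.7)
The plan is to reduce the claim to a direct entry-by-entry verification, using the explicit description \(Q(i,j)=\delta(j,i\cdot h\,(\ZZ_{p}))\) as the working definition of \(Q\). The first step will be to establish, by induction on \(k\), the closed form
\[
Q^{k}(i,j)=\delta(j,i\cdot h^{k}(\ZZ_{p})),\qquad 1\le i,j\le p-1,\ k\ge 1.
\]
The base case is the definition of \(Q\), and the inductive step follows from
\(Q^{k+1}(i,j)=\sum_{l}Q^{k}(i,l)Q(l,j)\), since the only \(l\) contributing to this sum is \(l=i\cdot h^{k}(\ZZ_{p})\), which then forces \(j=l\cdot h(\ZZ_{p})=i\cdot h^{k+1}(\ZZ_{p})\).

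Next I would evaluate the right-hand side of the claimed identity entrywise. Using the formula for \(Q^{k}\),
\[
\left(\sum_{k=1}^{p-1}(h^{k}(\ZZ_{p}))^{m}Q^{k}\right)(i,j)=\sum_{k=1}^{p-1}(h^{k}(\ZZ_{p}))^{m}\,\delta(j,i\cdot h^{k}(\ZZ_{p})).
\]
For fixed \(i,j\in\{1,\dots,p-1\}\), the condition \(j=i\cdot h^{k}(\ZZ_{p})\) is equivalent to \(h^{k}(\ZZ_{p})=i^{-1}\cdot j\,(\ZZ_{p})\). Because \(h\) is a primitive of \(\ZZ_{p}\), the map \(k\mapsto h^{k}(\ZZ_{p})\) is a bijection from \(\{1,\dots,p-1\}\) onto \(\{1,\dots,p-1\}\), so there is a unique such \(k\), and for that \(k\) we have \(h^{k}(\ZZ_{p})=i^{-1}\cdot j\,(\ZZ_{p})\).

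Substituting this value back gives that the sum equals \((i^{-1}\cdot j\,(\ZZ_{p}))^{m}\), which is precisely \(A_{p,m}(i,j)\) by definition. Since \(i\) and \(j\) were arbitrary, the matrix identity follows. The only nontrivial input is the primitivity of \(h\), which guarantees both the bijectivity needed for the uniqueness argument and, incidentally, that the powers \(Q,Q^{2},\dots,Q^{p-1}\) are distinct; the remainder of the argument is a routine computation, so I do not anticipate a genuine obstacle.
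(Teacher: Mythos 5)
Your proof is correct, but it takes a genuinely different route from the paper's. You verify the identity entrywise: after establishing \(Q^{k}(i,j)=\delta(j,i\cdot h^{k}\,(\ZZ_{p}))\) by induction, you use the primitivity of \(h\) to see that for each fixed \((i,j)\) exactly one \(k\in\{1,\dots,p-1\}\) contributes to the sum, and that its coefficient \((h^{k}(\ZZ_{p}))^{m}=(i^{-1}\cdot j\,(\ZZ_{p}))^{m}\) is precisely \(A_{p,m}(i,j)\). The paper instead conjugates by the permutation matrix \(P\) from Theorem~\ref{thm:main}, shows \(PQP^{T}=\textrm{Circ}(e_{2}^{T})=:S\), reads off \(PA_{p,m}P^{T}=\sum_{k}(h^{k}(\ZZ_{p}))^{m}S^{k}\) from the circulant representation already established, and conjugates back. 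Your argument is more elementary and self-contained in that it does not invoke Theorem~\ref{thm:main} at all; the paper's argument, by contrast, earns the byproduct \(PQP^{T}=S\), which is reused immediately afterward (in the Remark, to see that the powers of \(Q\) form a cyclic group of order \(p-1\), and in the proof of Theorem~\ref{thm:eig}, to obtain the eigenvectors of \(Q\) by conjugating those of \(S\)). Note that your closed form for \(Q^{k}\) independently recovers the Remark's observation that distinct powers \(Q^{k}\), \(1\le k\le p-1\), have disjoint supports, so little is lost by your route.
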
 

\begin{proof}
Let \(P\) be the permutation matrix that sends \(e_{(h^{j}(\ZZ_{p}))}\) to \(e_{j}\) for each \(1\leq j\leq p-1\), as in the proof of Theorem \ref{thm:main}. Then \((PQP^{T})(i,j)=Q(h^{i}(\ZZ_{p}),h^{j}(\ZZ_{p}))\) by the same computation as in that proof, and this equals \(\delta(h^{j}(\ZZ_{p}),h^{i+1}(\ZZ_{p}))\). Since the powers \(h^{k}(\ZZ_{p})\) of \(h\) are distinct for \(1\leq k\leq p-1\) by definition of a primitive,  \((PQP^{T})(i,j)=1\) when \(j=i+1\) for \(1\leq i\leq p-2\) and when \(j=1\) for \(i=p-1\) --- but this is exactly \(\textrm{Circ}(e_{2}^{T})\), which we will denote \(S\) for the remainder of this proof. Since \(PA_{p,m}P^{T}\) is a circulant, we have \(PA_{p,m}P^{T}=\sum_{k=1}^{p-1}(h^{k}(\ZZ_{p}))^{m}S^{k}\) (see Theorem~\ref{thm:main} and its proof), so \(A_{p,m}=\sum_{k=1}^{p-1}(h^{k}(\ZZ_{p}))^{m}(P^{T}SP)^{k}=\sum_{k=1}^{p-1}(h^{k}(\ZZ_{p}))^{m}Q^{k}\).
\qquad\end{proof}

\noindent\emph{Remark:} \begin{romannum} 
\item We see that the powers of \(Q^{k}\) are distinct for \(1\leq k\leq p-1\), and in fact for \(1\leq k<k'\leq p-1\), \(Q^{k}\) and \(Q^{k'}\) have no common nonzero entries, since \(Q^{k}(i,j)=\delta(j,i\cdot h^{k}(\ZZ_{p}))\), as can be seen by the definition of \(Q\). Then from \(Q^{p-1}=(P^{T}SP)^{p-1}=P^{T}S^{p-1}P=I\), we see that the powers of \(Q^{k}\) form a cyclic group of order \(p-1\).
\item For positive integers \(m\) and \(m'\), \(\{A_{p,m},\,A_{p,m'},\,A_{p,m}^{T},\,A_{p,m'}^{T}\}\) is a commuting family (see also Theorem~\ref{thm:gen}(ix)). \qquad\endproof\end{romannum}

\subsection{Eigenvalues and eigenvectors of $A_{p,m}$}
Since $A_{p,m}$ is permutation-similar to a circulant matrix (cf. Theorem~\ref{thm:main}), and the eigenvalues and eigenvectors of circulant matrices are well understood (see, for example, \cite{KS}), we can write down the eigenvalues and eigenvectors of $A_{p,m}$ explicitly.

\begin{theorem} 
\label{thm:eig}
Let \(h\) be a primitive of \(\ZZ_{p}\). For $\ell=1,\dots, p-1$, let $z_\ell:=\exp(2\pi i\frac{\ell}{p-1})$ so that \(\{z_\ell:\ell=1,\dots, p-1\}\) are the distinct \((p-1)\)st roots of unity. Then the complete set of eigenvectors of \(A_{p,m}\) are 
$$\nu_{\ell}=\sum_{j=1}^{p-1}z_\ell^je_{(h^{j}(\ZZ_{p}))},\quad \ell=1,\dots, p-1$$ 
where \(e_{a}\) denotes the \(a\)th standard unit vector in $\mathbb{C}^{p-1}$ (considered as a column vector), and the eigenvalue of \(A_{p,m}\) corresponding to $\nu_\ell$ is 
$$\lambda_{\ell}=\sum_{j=1}^{p-1} z_\ell^j(h^{j}(\ZZ_{p}))^m.$$
\end{theorem}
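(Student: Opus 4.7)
The plan is to invoke Theorem~\ref{thm:main} to reduce the problem to the spectral decomposition of circulant matrices, and then pull back the resulting eigenvectors through the similarity.

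Let $P$ be the permutation matrix from the proof of Theorem~\ref{thm:main}, so that $C:=PA_{p,m}P^{T}$ is the circulant with first row $[1,(h(\ZZ_{p}))^{m},(h^{2}(\ZZ_{p}))^{m},\ldots,(h^{p-2}(\ZZ_{p}))^{m}]$. By the standard spectral theory of circulants (e.g.~\cite{KS}), for each $\ell=1,\ldots,p-1$ the vector $w_{\ell}:=\sum_{i=1}^{p-1}z_{\ell}^{i-1}e_{i}$ is an eigenvector of $C$ with eigenvalue $\mu_{\ell}:=\sum_{k=0}^{p-2}(h^{k}(\ZZ_{p}))^{m}z_{\ell}^{k}$. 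Since $h$ is a primitive we have $h^{p-1}(\ZZ_{p})=1$, and $z_{\ell}^{p-1}=1$, so the $k=0$ term equals the would-be $k=p-1$ term; re-indexing the sum over $j=1,\ldots,p-1$ then yields exactly the $\lambda_{\ell}$ in the statement.

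To translate eigenvectors, note that $A_{p,m}=P^{T}CP$ and $P^{T}P=I$, so $A_{p,m}(P^{T}w_{\ell})=P^{T}Cw_{\ell}=\mu_{\ell}(P^{T}w_{\ell})$. By the definition of $P$ in the proof of Theorem~\ref{thm:main}, $P^{T}e_{j}=e_{h^{j}(\ZZ_{p})}$ for each $1\le j\le p-1$, and therefore
$$P^{T}w_{\ell}=\sum_{i=1}^{p-1}z_{\ell}^{i-1}e_{h^{i}(\ZZ_{p})}=z_{\ell}^{-1}\nu_{\ell},$$
so $\nu_{\ell}$ itself is an eigenvector of $A_{p,m}$ with eigenvalue $\lambda_{\ell}$. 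That the $\nu_{\ell}$ exhaust the spectrum follows because the $w_{\ell}$ are the columns of the $(p-1)\times(p-1)$ DFT matrix, hence linearly independent, and $P^{T}$ is nonsingular.

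The argument is essentially bookkeeping on top of the standard circulant eigen-formula, so there is no real obstacle. The only points requiring care are matching indexing conventions (the circulant's entries are naturally indexed from $0$ to $p-2$ while $\lambda_{\ell}$ in the statement is indexed from $1$ to $p-1$), and observing that the stated eigenvector $\nu_{\ell}$ differs from the natural candidate $P^{T}w_{\ell}$ by the nonzero scalar $z_{\ell}^{-1}$, which is immaterial.
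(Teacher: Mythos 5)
Your proposal is correct and follows essentially the same route as the paper: both conjugate the standard Fourier eigenvectors of a circulant by the permutation $P$ from Theorem~\ref{thm:main} (the paper phrases the eigenvalue computation via the polynomial in $Q$ from Corollary~\ref{cor:poly}, which is the same calculation as reading off the circulant's symbol from its first row). Your bookkeeping --- the re-indexing using $h^{p-1}(\ZZ_{p})=1=z_{\ell}^{p-1}$, the identity $P^{T}w_{\ell}=z_{\ell}^{-1}\nu_{\ell}$, and the completeness argument via nonsingularity of the DFT matrix and $P^{T}$ --- all checks out.
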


\begin{proof} Using the well-known formula for the eigenectors of a circulant matrix, we know that the complete set of eigenvectors for \(\textrm{Circ}(e_{2}^{T})\) are \(\tilde{\nu}_{\ell}:=[z_{\ell}^{k}]_{k=1}^{p-1}\), \(1\leq \ell\leq p-1\). Then the eigenvectors of \(Q\) as in Corollary~\ref{cor:poly} are \(\nu_{\ell}=P^{T}\tilde{\nu}_{\ell}=[z_{\ell}^{\sigma(k)}]_{k=1}^{p-1}\) for \(1\leq \ell\leq p-1\), where \(\sigma(k)\) is the element \(j\in\{1,\ldots,p-1\}\) such that \(h^{j}(\ZZ_{p})=k\). Alternatively, \(\nu_{\ell}(h^{k}(\ZZ_{p}))=z_{\ell}^{k}\), as in the statement. By computation, \(Q\nu_{\ell}=z_{\ell}\nu_{\ell}\), so we have for any polynomial \(r\), \(r(Q)\nu_{\ell}=r(z_{\ell})\nu_{\ell}\). Then in particular, \(A_{p,m}\nu_{\ell}=\lambda_{\ell}\nu_{\ell}\), with \(\lambda_{\ell}\) as given above, using the polynomial found in Corollary~\ref{cor:poly}.\qquad\end{proof}

The next corollary of Theorem~\ref{thm:eig} says that the eigenvectors $\nu_\ell$ found in the theorem are either symmetric, i.e. $J\nu_\ell=\nu_\ell$, or skew-symmetric, i.e. $J\nu_\ell=-\nu_\ell$, depending on the parity of $\ell$. Here, $J$ is used to denote the reversal matrix as before.
\begin{corollary}
\label{cor:symm}
For even \(\ell\), \(J\nu_{\ell}=\nu_{\ell}\) holds, and for odd \(\ell\), \(J\nu_{\ell}=-\nu_{\ell}\) holds.
In particular, there is an eigenvector $v$ of \(A_{p,m}\) with all entries in \(\{1,-1\}\), which satisfies $Jv=v$ if \(p\,(\ZZ_{4})=1\), and $Jv=-v$ if \(p\,(\ZZ_{4})=3\).
\end{corollary}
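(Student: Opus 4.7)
The plan is to compute $J\nu_\ell$ directly from the explicit formula for $\nu_\ell$ in Theorem~\ref{thm:eig}. The reversal matrix $J$ acts on the standard basis by $Je_k=e_{p-k}$, so applying $J$ to $\nu_\ell = \sum_{j=1}^{p-1} z_\ell^j e_{h^j(\ZZ_p)}$ produces $\sum_{j=1}^{p-1} z_\ell^j e_{p-h^j(\ZZ_p)}$. The key algebraic observation I would use is that since $h$ is primitive in $\ZZ_p$, the element $h^{(p-1)/2}$ has order $2$ in the multiplicative group and must therefore equal $-1$; hence $p - h^j(\ZZ_p) = h^{j+(p-1)/2}(\ZZ_p)$.

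Next, I would substitute $k \equiv j + (p-1)/2 \pmod{p-1}$ in the sum. Because both $h^j(\ZZ_p)$ and $z_\ell^j$ depend only on $j$ modulo $p-1$, as $j$ ranges over $\{1,\dots,p-1\}$ so does $k$, and $z_\ell^j = z_\ell^{k-(p-1)/2}$. Pulling out the constant factor yields
\[
J\nu_\ell = z_\ell^{-(p-1)/2}\sum_{k=1}^{p-1} z_\ell^k e_{h^k(\ZZ_p)} = z_\ell^{-(p-1)/2}\nu_\ell.
\]
Since $z_\ell^{-(p-1)/2} = \exp(-\pi i \ell) = (-1)^\ell$, this gives $J\nu_\ell = (-1)^\ell \nu_\ell$, establishing both the symmetric case (even $\ell$) and the skew-symmetric case (odd $\ell$).

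For the ``in particular'' statement, I would take $\ell = (p-1)/2$, so that $z_\ell = -1$ and
\[
v := \nu_{(p-1)/2} = \sum_{j=1}^{p-1}(-1)^j e_{h^j(\ZZ_p)}
\]
has all entries in $\{1,-1\}$. Applying the identity just established, $Jv = (-1)^{(p-1)/2} v$. The parity of $(p-1)/2$ is controlled by $p\,(\ZZ_4)$: if $p\,(\ZZ_4)=1$ then $(p-1)/2$ is even and $Jv=v$, while if $p\,(\ZZ_4)=3$ then $(p-1)/2$ is odd and $Jv=-v$.

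I do not anticipate a serious obstacle here; the only subtlety is bookkeeping the index shift modulo $p-1$ correctly, which is clean once one invokes $h^{(p-1)/2}\equiv -1\pmod p$. The result could also be verified by noting that $J$ has eigenvalues $\pm 1$, commutes with $A_{p,m}$ by centrosymmetry, and so acts by a sign on each eigenspace; but the direct computation above is more informative because it pins down which sign occurs for which $\ell$.
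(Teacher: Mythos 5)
Your proposal is correct and follows essentially the same route as the paper: both hinge on the identity $h^{(p-1)/2}\equiv -1\pmod p$ (the paper phrases it as $(h^{k}(\ZZ_{p}))+(h^{(p-1)/2+k}(\ZZ_{p}))=p$), perform the same index shift by $(p-1)/2$ modulo $p-1$ to get $J\nu_{\ell}=z_{\ell}^{-(p-1)/2}\nu_{\ell}=(-1)^{\ell}\nu_{\ell}$, and then specialize to $\ell=(p-1)/2$ for the second claim. No gaps; your justification that the unique element of order $2$ in $\ZZ_{p}^{\times}$ is $-1$ is a clean way to establish the key identity.
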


\begin{proof} 
We first make a simple observation: 
\begin{equation}
\label{eq:relation}
(h^{k}(\ZZ_{p}))+(h^{(p-1)/2+k}(\ZZ_{p}))=p,\textrm{ for }k=1,\ldots,(p-1)/2.
\end{equation}
Using this, we see that \begin{eqnarray*}J\nu_{\ell}&=&\sum_{k=1}^{p-1}z_{\ell}^{k}e_{p-(h^{k}(\ZZ_{p}))}=\sum_{k=1}^{p-1}z_{\ell}^{k}e_{(h^{(p-1)/2+k}(\ZZ_{p}))}\\&=&\sum_{k=1}^{p-1}z_{\ell}^{k-(p-1)/2}e_{(h^{k}(\ZZ_{p}))}=z_{\ell}^{-(p-1)/2}\nu_{\ell}.\end{eqnarray*} 
The first statement follows since \(z_{\ell}^{-(p-1)/2}=\mathrm{exp}\left(\frac{2\pi i\ell}{p-1}\left(-\frac{p-1}{2}\right)\right)=\mathrm{exp}(-\pi i\ell)=(-1)^{\ell}\).
The second statement follows from the first one by considering $\ell=(p-1)/2$ and the eigenvector $v:=\nu_{(p-1)/2}=\sum_{j=1}^{p-1}\exp(ij\pi)e_{(h^{j}(\ZZ_{p}))}$.
\qquad\end{proof}

\noindent\emph{Remark}: We recall that for a square matrix \(A\) of order \(n\), where \(n\) is even, being centrosymmetric is equivalent to commuting with \(J\). Hence if, in addition to being centrosymmetric, \(A\) is diagonalizable, then $A$ and $J$ are simultaneously diagonalizable, which means that there exists an invertible matrix $V=[v_1,\dots,v_n]$ such that $AV=V\Lambda_A$ and $JV=V\Lambda_J$, where $v_i$'s are common eigenvectors of $A$ and $J$, and $\Lambda_A$ and $\Lambda_J$ are diagonal matrices consisting of eigenvalues of $A$ and $J$, respectively. Since it is easy to see that $$\Lambda_J=\left[\begin{array}{cc}I&0\\0&-I\end{array}\right],$$
we obtain that $Jv_i=v_i$ for $i=1,\dots,{n\over 2}$, and $Jv_i=-v_i$ for $i={n\over 2}+1,\dots,n$.
Thus, if $A$ is centrosymmetric and diagonalizable, then any matrix that diagonalizes \(A\) must have \(n/2\) columns spanning \(\{v\in\mathbb{C}^{n}:Jv=v\}\), and \(n/2\) columns spanning \(\{v\in\mathbb{C}^{n}:Jv=-v\}\).

Since our matrix \(A_{p,m}\) is diagonalizable and centrosymmetric, the above holds for \(A_{p,m}\), hence the main contribution  made by the preceding corollary is in identifying the symmetric and skew-symmetric eigenvectors among \(\nu_{\ell},\,\ell=1,\ldots,p-1\).\qquad\endproof

Recall from Lemma~\ref{lem:detinv}(iv) that, for every prime $p\ge 5$, $\det A_{p,1}=0$, i.e., $\lambda=0$ is an eigenvalue of $A_{p,1}$. In fact, by inspection of the proof there, it is clear that 0 is an eigenvalue of algebraic multiplicity at least \((p-1)/2-1\). We now use Theorem~\ref{thm:eig} to find \((p-1)/2-1\) linearly independent eigenvectors among the \(\nu_{\ell},\,1\leq\ell\leq p-1\), corresponding to the eigenvalue $\lambda=0$ when \(m=1\), which proves that the geometric multiplicity of 0 is also at least \((p-1)/2-1\) in this case.

\begin{corollary} 
\label{coro:mone}
Let \(\ell\in\{1,\ldots,p-1\}\) be even and \(\ell\neq p-1\). Then the eigenvalue \(\lambda_{\ell}\) of \(A_{p,1}\) is equal to 0.\end{corollary}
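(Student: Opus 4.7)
The plan is to directly evaluate the formula for $\lambda_\ell$ from Theorem~\ref{thm:eig} with $m=1$, namely
$$\lambda_\ell = \sum_{j=1}^{p-1} z_\ell^{j}\,h^{j}(\ZZ_p),$$
by splitting the sum at the midpoint $j=(p-1)/2$ and applying the identity \eqref{eq:relation}, $h^{k+(p-1)/2}(\ZZ_p) = p - h^k(\ZZ_p)$. In the second half I substitute $j = k + (p-1)/2$ to obtain
$$\sum_{j=(p-1)/2+1}^{p-1} z_\ell^{j}\,h^{j}(\ZZ_p) = z_\ell^{(p-1)/2}\sum_{k=1}^{(p-1)/2} z_\ell^{k}\bigl(p - h^{k}(\ZZ_p)\bigr).$$

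Here the evenness of $\ell$ enters crucially: as in the proof of Corollary~\ref{cor:symm}, $z_\ell^{(p-1)/2} = (-1)^\ell = 1$. Substituting this back, the terms involving $h^k(\ZZ_p)$ cancel between the two halves, leaving
$$\lambda_\ell = p\sum_{k=1}^{(p-1)/2} z_\ell^{k}.$$

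To finish, I note that since $z_\ell^{(p-1)/2}=1$, the number $z_\ell$ is a $(p-1)/2$-nd root of unity, and the hypothesis $\ell \neq p-1$ guarantees that $z_\ell \neq 1$. A geometric-series computation,
$$\sum_{k=1}^{(p-1)/2} z_\ell^{k} = z_\ell\,\frac{z_\ell^{(p-1)/2}-1}{z_\ell-1} = 0,$$
then yields $\lambda_\ell = 0$ as desired. The entire argument is essentially bookkeeping; the only step requiring any thought is the use of \eqref{eq:relation} together with the evenness of $\ell$ to force cancellation, and this has already been set up in the proof of Corollary~\ref{cor:symm}.
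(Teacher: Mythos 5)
Your proof is correct, and it takes a genuinely different route from the paper's. The paper sets $d=\gcd(\ell,p-1)$ and $f=(p-1)/d$, partitions the sum $\sum_{j}z_\ell^{j}h^{j}(\ZZ_{p})$ into $f$ blocks on which $z_\ell^{j}$ is constant, and uses (\ref{eq:relation}) together with $2\mid d$ to show each block of $h$-values sums to $dp/2$; what remains is the sum of a primitive $f$th root of unity over a full period, which vanishes precisely because $f>1$ when $\ell\neq p-1$. You instead split the sum just once, at $j=(p-1)/2$, and use the same identity (\ref{eq:relation}) together with $z_\ell^{(p-1)/2}=(-1)^{\ell}=1$ to cancel the $h$-dependence outright, leaving $\lambda_\ell=p\sum_{k=1}^{(p-1)/2}z_\ell^{k}$, a geometric sum of a nontrivial $(p-1)/2$-nd root of unity, hence zero. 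Both arguments rest on the same two ingredients --- the pairing (\ref{eq:relation}) and the evenness of $\ell$ --- but your coarser two-block decomposition avoids the gcd bookkeeping entirely and is shorter; it also produces the clean intermediate identity $\lambda_\ell=p\sum_{k=1}^{(p-1)/2}z_\ell^{k}$, valid for every even $\ell$, which as a sanity check returns $\lambda_{p-1}=p(p-1)/2=\sum_{k=1}^{p-1}k$ in the excluded case. The paper's finer partition yields a bit more structural information (each block of $d$ terms contributes exactly $dp/2$), but for the corollary as stated your argument is complete and arguably preferable.
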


\begin{proof} 
Let us denote the greatest common divisor of two positive integers \(a\) and \(b\) by 
\(\mathrm{gcd}(a,b)\). Since \(\ell\) 
is even, 
\(\mathrm{gcd}(\ell,p-1)=d\), \(p-1=fd\), \(\ell=gd\), 
so that 
\(\mathrm{gcd}(f,g)=1\)
and 
\(2|d\). 
Observing that \(\exp\left(\frac{2\pi ik\ell}{p-1}\right)=\exp\left(\frac{2\pi ikg}{f}\right)\) is \(f\)-periodic in \(k\), we may write \[\lambda_{\ell}=\sum_{k=1}^{p-1}(h^{k}(\ZZ_{p}))z_{\ell}^{k}=\sum_{k=1}^{f}\left(\sum_{j=1}^{d}(h^{f(j-1)+k}(\ZZ_{p}))\right)\exp\left(\frac{2\pi ikg}{f}\right).\] Also, note that \(f(d/2)=\frac{p-1}{2}\). Then 
$$
\sum_{j=1}^{d}h^{f(j-1)+k}(\ZZ_{p})=\sum_{j=1}^{d/2}(h^{f(j-1)+k}(\ZZ_{p}))+(h^{f(d/2+j-1)+k}(\ZZ_{p}))=\frac{d}{2}p,
$$ 
where the last line follows from the relation (\ref{eq:relation}) mentioned in the proof of Corollary \ref{cor:symm}. Thus \(\lambda_{\ell}=\frac{dp}{2}\sum_{k=1}^{f}\exp\left(\frac{2\pi ikg}{f}\right)\), and since \(\mathrm{gcd}(f,g)=1\), \(\exp\left(\frac{2\pi ikg}{f}\right)\) is a primitive \(f\)th root of unity \cite{SL}, so \(\lambda_{\ell}=0\). Observe that when \(\ell=p-1\), \(z_{\ell}=1\), so \(\lambda_{\ell}=\sum_{k=1}^{p-1}k^{m}\neq 0\). \qquad\end{proof}\\

\noindent\emph{Remark}: It is easy to find \((p-1)/2-1\) real eigenvectors of \(A_{p,1}\) with eigenvalue 0, and in fact these have a nice structure. For a prime \(p\geq 5\), let \(C\in\mathbb{R}^{(p-1)/2\times(p-1)/2-1}\) be the matrix with columns \(e_{1}-e_{k}\), \(2\leq k\leq (p-1)/2\). Then \[A_{p,1}\left[\begin{array}{c}C\\JC\end{array}\right]=0.\] This follows since for \(2\leq k\leq p-1\), \((A_{p,1}(e_{1}+e_{p-1}-e_{k}-e_{p-k}))(i)=(i^{-1}(\ZZ_{p}))+(p-i^{-1}(\ZZ_{p}))-(i^{-1} k\,(\ZZ_{p}))-(p-i^{-1} k\,(\ZZ_{p}))=p-p=0\), for all $i=1,\dots,p-1$.\qquad\endproof

Since we know the eigenvalues of $A_{p,m}$ (cf. Theorem~\ref{thm:eig}), its determinant can be written explicitly as follows.
\begin{corollary} 
\label{cor:det}
The determinant of \(A_{p,m}\) is given by \[\prod_{\ell=1}^{p-1}\left(\sum_{k=1}^{p-1}z_\ell^k(h^{k}(\ZZ_{p}))^{m}\right).\]
\end{corollary}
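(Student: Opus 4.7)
The plan is to invoke Theorem~\ref{thm:eig} directly: the determinant of any matrix equals the product of its eigenvalues, counted with algebraic multiplicity, so once I know the complete spectrum of $A_{p,m}$ the result should be a one-line computation. The only thing I need to verify carefully is that the $p-1$ numbers $\lambda_\ell$ written down in Theorem~\ref{thm:eig} really account for the full spectrum, multiplicities included.

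First I would recall from Theorem~\ref{thm:main} (and Corollary~\ref{cor:norm}) that $A_{p,m}$ is unitarily similar to a circulant matrix, hence is diagonalizable and in particular has $p-1$ linearly independent eigenvectors. Next, I would argue that the vectors $\nu_1,\ldots,\nu_{p-1}$ constructed in Theorem~\ref{thm:eig} are linearly independent: each $\nu_\ell$ is obtained by applying the permutation matrix $P^T$ (from the proof of Theorem~\ref{thm:main}) to the standard Fourier eigenvector $\tilde\nu_\ell = [z_\ell^k]_{k=1}^{p-1}$ of $\mathrm{Circ}(e_2^T)$, and the $\tilde\nu_\ell$ form the (invertible) discrete Fourier matrix. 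Since $P^T$ is invertible, the $\nu_\ell$ form a basis of $\mathbb{C}^{p-1}$. Consequently, the set $\{\lambda_\ell : 1 \le \ell \le p-1\}$, with the $\lambda_\ell = \sum_{k=1}^{p-1} z_\ell^k (h^k(\ZZ_p))^m$ repeated according to how often they occur in $\ell$, is exactly the multiset of eigenvalues of $A_{p,m}$.

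Finally I would conclude
\[
\det A_{p,m} = \prod_{\ell=1}^{p-1} \lambda_\ell = \prod_{\ell=1}^{p-1}\left(\sum_{k=1}^{p-1} z_\ell^k (h^k(\ZZ_p))^m\right),
\]
which is the claimed formula. There is no real obstacle here; this corollary is essentially a bookkeeping consequence of Theorem~\ref{thm:eig}, and the only subtlety worth flagging is the (already noted) linear independence of the $\nu_\ell$ so that no eigenvalue is undercounted.
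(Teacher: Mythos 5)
Your proposal is correct and matches the paper's approach: the paper gives no separate proof, treating the corollary as an immediate consequence of Theorem~\ref{thm:eig} via the fact that the determinant is the product of the eigenvalues. Your extra care in checking that the $\nu_\ell$ form a basis (so that the $\lambda_\ell$ exhaust the spectrum with correct multiplicity) is a reasonable bit of bookkeeping that the paper leaves implicit in the phrase ``complete set of eigenvectors.''
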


Corollary~\ref{cor:det} provides an explicit formula for the determinant of $A_{p,m}$, so one can always compute the determinant of $A_{p,m}$ for any fixed $p$ and $m$. This result can be considered complementary to Lemma~\ref{lem:detinv}, where we collected information about the determinant of $A_{p,m}$ for some special cases. Knowing the determinant $A_{p,m}$, or the invertibility of $A_{p,m}$, can be useful in solving some other  problems. 

For example, the invertibility of $A_{p,m}$ can be used to answer a question about the approximation power of a new multidimensional wavelet system constructed in \cite{PCS}. In fact, this connection prompted our interest in the matrices $A_{p,m}$, hence we mention it very briefly below, leaving all the technical details to the article.

Let $p$ be an odd prime and $m$ a positive integer. Let $R(\xi):=\sum_{k\in\ZZ}r(k)e^{-ik\xi}$, where $r(k)$ is nonzero for only finitely many $k$, and $\xi\in[-\pi,\pi]$. Suppose that $R(0)=1$ and $R$ has a zero of order $m$ at every point in ${2\pi\over p}\{1,\cdots,p-1\}$. For $\dm\ge 2$, define $\tau$ as
$$\tau(\ome):={1\over
(p-1)p^{\dm-1}}\left(1-p^{\dm-1}
+\sum_{\nu\in\{0,1,\cdots,p-1\}^\dm\bks0}R(\ome\cdot\nu)\right),\quad\ome\in[-\pi,\pi]^\dm.$$
A proof that \(\tau\) is well-defined can be found in \cite{PCS}. Then one has the following result.

\begin{lemma}
\label{lem:pcs}
If $A_{p,m}$ is invertible, then the order of zeros of $\tau$ at every point in ${2\pi\over p}\{0,1,\cdots,p-1\}^\dm\bks0$ is at most $m$, regardless of $\dm$.
\end{lemma}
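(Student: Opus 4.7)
The plan is to show, for each $\ome_0=\frac{2\pi\mu}{p}$ with $\mu\in\{0,\ldots,p-1\}^{\dm}\bks 0$, that some mixed partial $\partial^\alpha\tau(\ome_0)$ with $|\alpha|\le m$ is nonzero. First I would verify $\tau(\ome_0)=0$ by partitioning the summation index set according to $k:=\mu\cdot\nu\,(\ZZ_p)$; writing $S_k:=\{\nu\in\{0,\ldots,p-1\}^{\dm}:\mu\cdot\nu\equiv k\pmod p\}$, the fact that $\mu\ne 0$ makes $|S_k|=p^{\dm-1}$ for every $k$. Using $2\pi$-periodicity of $R$ and the hypothesis $R(\frac{2\pi k}{p})=0$ for $k=1,\ldots,p-1$, the sum $\sum_{\nu\ne 0}R(\ome_0\cdot\nu)$ collapses to $p^{\dm-1}-1$ (only $\nu\in S_0$ contributes, each giving $R(0)=1$), which cancels the $1-p^{\dm-1}$ in the definition of $\tau$.

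Next I would analyze the derivatives at $\ome_0$. Set $\beta_k:=R^{(m)}(\frac{2\pi k}{p})$; these are nonzero for $k=1,\ldots,p-1$ by the order-$m$ zero hypothesis, so $\beta:=(\beta_1,\ldots,\beta_{p-1})^T\ne 0$. For $|\alpha|<m$, the same partitioning together with the vanishing of $R^{(|\alpha|)}$ at $\frac{2\pi k}{p}$ for $k\ne 0$ leaves only
\be
\partial^\alpha\tau(\ome_0)=\frac{R^{(|\alpha|)}(0)}{(p-1)p^{\dm-1}}\sum_{\nu\in S_0}\nu^\alpha;
\ee
if any of these is nonzero the order is already below $m$ and we are done, so we may turn to $|\alpha|=m$, where a term-by-term Taylor expansion gives
\be
\partial^\alpha\tau(\ome_0)=\frac{1}{(p-1)p^{\dm-1}}\sum_{k=0}^{p-1}\beta_k\sum_{\nu\in S_k}\nu^\alpha.
\ee

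To exhibit the matrix $A_{p,m}$, I pick $j$ with $\mu_j\ne 0$ and take $\alpha=me_j$. In the axis-aligned case $\mu=ae_j$ the fiber sum collapses: inside $S_k$ the coordinate $\nu_j$ is forced to $a^{-1}k\,(\ZZ_p)$ and the other coordinates are free, giving $\sum_{\nu\in S_k}\nu_j^m=p^{\dm-1}\bigl(a^{-1}k\,(\ZZ_p)\bigr)^m$. Substituting yields
\be
\partial^{me_j}\tau(\ome_0)=\frac{1}{p-1}\sum_{k=1}^{p-1}A_{p,m}(a,k)\,\beta_k=\frac{(A_{p,m}\beta)_a}{p-1},
\ee
and since $\beta\ne 0$ and $A_{p,m}$ is invertible, $(A_{p,m}\beta)_a\ne 0$, establishing the bound at axis-aligned grid points. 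The main obstacle is the general $\mu$ with several nonzero components, where $\nu_j$ is no longer constant on $S_k$; my plan is to reindex $\nu_j\mapsto\mu_j^{-1}(k-\sum_{l\ne j}\mu_l\nu_l)\,(\ZZ_p)$ so that the inner sum becomes, after a free summation over the remaining coordinates, again a linear combination of the $\beta_k$ of Maillet type. I expect this reduces the question uniformly to the invertibility of $A_{p,m}$, possibly after a permutation coming from the multiplicative action of the $\mu_l$ on $\ZZ_p$.
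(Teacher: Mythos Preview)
Your axis-aligned computation is correct up to the last step, but the inference ``since $\beta\ne 0$ and $A_{p,m}$ is invertible, $(A_{p,m}\beta)_a\ne 0$'' is false: invertibility only gives $A_{p,m}\beta\ne 0$ as a vector, so \emph{some} entry is nonzero, not the one indexed by your particular $a$. This is exactly how the paper argues. Rather than fixing a grid point and trying to show a specific row of $A_{p,m}\beta$ is nonzero, the paper restricts $\tau$ to the first coordinate axis and obtains the closed form
\[
\tau(\xi,0,\ldots,0)=\frac{1}{p-1}\sum_{k=1}^{p-1}R(k\xi),
\]
differentiates $m$ times in $\xi$, and evaluates at $\xi=2\pi\ell/p$ for all $\ell=1,\ldots,p-1$ simultaneously. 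After the substitution $k\mapsto k\ell^{-1}\,(\ZZ_p)$ the resulting values are recognized as the entries of $\frac{1}{p-1}A_{p,m}v$ with $v=[(D^mR)(2\pi k/p)]_{k=1}^{p-1}$; invertibility forces this vector to be nonzero, so the order-$m$ derivative is nonzero at \emph{some} axis point $(\frac{2\pi\ell}{p},0,\ldots,0)$, and the paper stops there---it makes no attempt to treat general $\mu$.

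Your fiber decomposition is correct as far as it goes but is more apparatus than the paper deploys, and your outline for non-axis $\mu$---reindexing $\nu_j$ and hoping the inner sums reassemble into rows of $A_{p,m}$---is not carried out: the sums $\sum_{\nu'}\bigl(\mu_j^{-1}(k-\mu'\cdot\nu')\,(\ZZ_p)\bigr)^m$ that arise are not single rows of $A_{p,m}$, and reducing them to the invertibility of $A_{p,m}$ would require a further averaging argument you have not supplied.
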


\begin{proof}
We will denote the partial derivative operator with \(\alpha(i)\) partial derivatives on \(\omega(i)\) for each \(1\leq i\leq n\) for some \(\alpha\in\ZZ^{n}\) such that \(\alpha(i)\geq 0,\,1\leq i\leq n\) by \(D^{\alpha}\), and the univariate \(k\)th derivative operator by \(D^{k}\). Let $\Gamma=\{0,1,2,\ldots,p-1\}^{n}\setminus0$. Then clearly there are \(p^{n}-1\) vectors \(\nu\in\Gamma\), and of these,
\(p^{n-1}-1\) have \(\nu(1)=0\), and \(p^{n-1}\) have \(\nu(1)=k\) for each \(k=1,2,\ldots,p-1\). Then for \(\xi\in[-\pi,\pi]\),
\begin{eqnarray*}
\tau(\xi,0,\ldots,0)&=&\frac{1}{(p-1)p^{n-1}}\left(1-p^{n-1}+\sum_{\nu\in\Gamma}R(\xi\nu(1))\right)\\&=&\frac{1}{(p-1)p^{n-1}}\left(1-p^{n-1}+(p^{n-1}-1)R(0)+p^{n-1}\sum_{k=1}^{p-1}R(k\xi)\right).\end{eqnarray*} 
Since \(R(0)=1\), this equals \(\frac{1}{p-1}\sum_{k=1}^{p-1}R(k\xi)\). 
Then \[D^{(m,0,\ldots,0)}\tau(\xi,0,\ldots,0)=\frac{1}{p-1}\sum_{k=1}^{p-1}D^{m}[R(k\xi)]=\frac{1}{p-1}\sum_{k=1}^{p-1}k^{m}[D^{m}R](k\xi).\] 
Evaluating at \(\xi=\frac{2\pi\ell}{p}\), we obtain 
\[\frac{1}{p-1}\sum_{k=1}^{p-1}k^{m}[D^{m}R]\left(\frac{2\pi k\ell}{p}\right)=\frac{1}{p-1}\sum_{k=1}^{p-1}(k\cdot\ell^{-1}(\ZZ_{p}))^{m}[D^{m}R]\left(\frac{2\pi k}{p}\right).\] Since the vector \(v:=[(D^{m}R)(2\pi k/p)]_{k=1}^{p-1}\) is nonzero because $R$ has a zero of order $m$ at every point in ${2\pi\over p}\{1,\cdots,p-1\}$, if \(A_{p,m}\) is invertible, then \(\frac{1}{p-1}A_{p,m} v\neq 0\). But since the computations above show that \[\frac{1}{p-1}(A_{p,m}v)(\ell)=D^{(m,0,\ldots,0)}\tau\left(\frac{2\pi\ell}{p},0,\ldots,0\right),\] the right hand side of this equation is nonzero for some \(\ell\), and this proves the desired statement.
\qquad\end{proof}

\subsection{Generalizations of $A_{p,m}$}
\label{gen}
While we have focused on the matrices \(A_{p,m}\) with the specific entries given in their definition, we see that we can obtain a large class of Latin square matrices to which our results may be applied by considering matrices that have the same permutation structure as \(A_{p,m}\), but possibly different entries. In particular, given a vector \(c\in\mathbb{C}^{p-1}\), we define \begin{equation}\label{eq:apc}A_{p}[c]=\left[c(i^{-1}\cdot j\,(\ZZ_{p}))\right]_{i,j=1}^{p-1}.\end{equation} Then \(A_{p,m}=A_{p}[k^{m}]_{k=1}^{p-1}\), and for any \(c\in\mathbb{C}^{p-1}\), \(A_{p}[c]\) is the Latin square matrix with the same structure for the locations of its entries as \(A_{p,m}\), but with first row given by the vector \(c\). For a fixed primitive \(h\) of \(\ZZ_{p}\), we will also define the polynomial associated with \(A_{p}[c]\), \(f_{c}(z)=\sum_{k=1}^{p-1}c(h^{k}(\ZZ_{p}))z^{k}\), where we suppress the dependence on \(h\) from the notation. Observe that \(A_{p}[c]\) may also be written as \(f_{c}(Q)\), where \(Q\) is the permutation matrix given in Corollary~\ref{cor:poly} for the same primitive \(h\) used to define \(f_{c}\). We collect the results about \(A_{p}[c]\) in the theorem below:
\begin{theorem}\label{thm:gen} Let \(c\in\mathbb{C}^{p-1}\), and let \(A_{p}[c]\) be given as in (\ref{eq:apc}). Also, fix a primitive \(h\) in \(\ZZ_{p}\), and let \(f_{c}(z)\) be the polynomial associated with \(A_{p}[c]\). Then \begin{romannum} \item \(A_{p}[c]\) is a Latin square matrix if the entries of \(c\) are distinct.
\item \(A_{p}[c]\) is centrosymmetric.
\item \(A_{p}[c]\) has constant diagonal \(c(1)\) and antidiagonal \(c(p-1)\).
\item \(A_{p}[c]\) is permutation similar to \(\textrm{Circ}(c(h^{k-1}(\ZZ_{p})))_{k=1}^{p-1}\).
\item \(A_{p}[c]\) is normal.
\item The eigenvectors of \(A_{p}[c]\) are given by \(\nu_{\ell}\) as in Theorem~\ref{thm:eig}, with corresponding eigenvalues \(\lambda_{\ell}(c)=f_{c}(z_{\ell})\), where $z_\ell=\exp(2\pi i\frac{\ell}{p-1})$ as before. 
\item Corollary~\ref{cor:symm} holds with \(A_{p,m}\) replaced by \(A_{p}[c]\) in its statement.
\item The determinant of \(A_{p}[c]\) is given by \(\prod_{\ell=1}^{p-1}f_{c}(z_{\ell})\). In particular, \(A_{p}[c]\) is invertible provided that \(f_{c}\) has no roots among the \((p-1)\)st roots of unity.
\item For any \(c,c'\in\mathbb{C}^{p-1}\), \(\{A_{p}[c],A_{p}[c'],A_{p}[c]^{*},A_{p}[c']^{*}\}\) is a commuting family.\end{romannum}\end{theorem}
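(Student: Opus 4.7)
The plan is to reduce essentially everything to the single identity $A_p[c] = f_c(Q)$ promised in the paragraph above the theorem, where $Q$ is the permutation matrix from Corollary~\ref{cor:poly}. This gives us a polynomial-in-$Q$ representation that inherits most structural properties from what we already proved for $A_{p,m}$. To verify this identity, I would note that $Q^k(i,j) = \delta(j, i\cdot h^k(\ZZ_p))$, so $(f_c(Q))(i,j) = \sum_{k=1}^{p-1} c(h^k(\ZZ_p))\,\delta(j,i\cdot h^k(\ZZ_p))$; the unique $k$ contributing is the one with $h^k(\ZZ_p) = i^{-1}j(\ZZ_p)$, yielding $c(i^{-1}j(\ZZ_p)) = A_p[c](i,j)$.

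For (i)--(iii), I would proceed directly from the definition exactly as in Lemma~\ref{lem:basic}: each row of $A_p[c]$ lists the entries $\{c(i^{-1}j(\ZZ_p))\}_{j=1}^{p-1}$, which is just the multiset of entries of $c$ (since $j\mapsto i^{-1}j(\ZZ_p)$ is a bijection on $\{1,\ldots,p-1\}$), so distinctness of entries of $c$ gives the Latin square property. Centrosymmetry follows verbatim from the computation $(p-i)^{-1}(p-j)(\ZZ_p) = i^{-1}j(\ZZ_p)$ established in the discussion preceding Lemma~\ref{lem:basic}. The diagonal and antidiagonal values are immediate from $i^{-1}\cdot i(\ZZ_p) = 1$ and $i^{-1}\cdot(p-i)(\ZZ_p) = p-1$. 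For (iv), I would reuse the conjugation computation from the proof of Theorem~\ref{thm:main} with $P$ the same permutation matrix: $(PA_p[c]P^T)(i,j) = A_p[c](h^i(\ZZ_p), h^j(\ZZ_p)) = c(h^{j-i}(\ZZ_p))$, which depends only on $j-i$ and wraps correctly since $h^{-1}(\ZZ_p) = h^{p-2}(\ZZ_p)$.

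Parts (v) and (vi) are then cheap consequences of $A_p[c] = f_c(Q)$: a polynomial in a unitarily diagonalizable matrix is unitarily diagonalizable (hence normal), and shares its eigenvectors. Since we already showed $Q\nu_\ell = z_\ell\nu_\ell$ in the proof of Theorem~\ref{thm:eig}, we get $A_p[c]\nu_\ell = f_c(z_\ell)\nu_\ell$ immediately. For (vii), the key identity $J\nu_\ell = (-1)^\ell \nu_\ell$ established in the proof of Corollary~\ref{cor:symm} depends only on the vectors $\nu_\ell$ and not on $A_{p,m}$, so it carries over unchanged; the symmetric/skew-symmetric eigenvector conclusion for $A_p[c]$ follows the same way, using the eigenvalue $f_c(z_{(p-1)/2})$ (which is a real number since $z_{(p-1)/2} = -1$) rather than $\lambda_{(p-1)/2}$. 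Part (viii) is then just the product of eigenvalues.

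The only part that requires any real thought is (ix), and I expect this to be the ``hard'' step, though still straightforward. The commutativity of $A_p[c]$ and $A_p[c']$ is immediate from both being polynomials in $Q$. For $A_p[c]^*$, I would observe that $Q$ is a real permutation matrix, so $Q^* = Q^T = Q^{-1}$, and since $Q$ has order $p-1$ (noted in the remark after Corollary~\ref{cor:poly}), $Q^{-1} = Q^{p-2}$ is itself a polynomial in $Q$. Therefore $A_p[c]^* = \overline{f_c}(Q^{p-2})$ is a polynomial in $Q$ as well, and the whole family $\{A_p[c], A_p[c'], A_p[c]^*, A_p[c']^*\}$ lies in the commutative algebra $\mathbb{C}[Q]$, hence commutes.
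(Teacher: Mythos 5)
Your proposal is correct and follows essentially the same route as the paper: parts (i)--(iv) by repeating the earlier computations for $A_{p,m}$ verbatim with $c$ in place of $k\mapsto k^m$, and the remaining parts by exploiting the representation $A_p[c]=f_c(Q)$ (which the paper states just before the theorem and invokes for (ix) exactly as you do, via $Q^*=Q^{-1}=Q^{p-2}\in\mathbb{C}[Q]$). One negligible slip: in (vii) your parenthetical claim that $f_c(z_{(p-1)/2})=f_c(-1)$ is real fails for general $c\in\mathbb{C}^{p-1}$, but this is irrelevant since Corollary~\ref{cor:symm} concerns only the symmetry of the eigenvectors $\nu_\ell$, which do not depend on $c$.
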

\begin{proof} (i)-(iii) are clear from (\ref{eq:apc}) and the discussion preceding Lemma~\ref{lem:basic}. To prove (iv), let \(P\) be the permutation matrix such that \(Pe_{(h^{j}(\ZZ_{p}))}=e_{j}\), \(1\leq j\leq p-1\), as in the proof of Theorem~\ref{thm:main}. Then following the computation there, \((PA_{p}[c]P^{T})(i,j)=c(h^{j-i}(\ZZ_{p}))\). It follows that \(PA_{p}[c]P^{T}\) is circulant in exactly the same way it is proven for \(A_{p,m}\) there. The proof of (v) is the same as that of Corollary~\ref{cor:norm} with \(A_{p}[c]\) replacing \(A_{p,m}\) throughout, and (iv) replacing Theorem~\ref{thm:main}. (vi) follows immediately from the last line of the proof of Theorem~\ref{thm:eig}, with \(r=f_{c}\), and (vii) and (viii) are obvious. (ix) follows since all four of these matrices are polynomials in \(Q\).
\qquad\end{proof}

\section{Summary and Overview}
In this paper, we studied the determinant of an extension of the matrix underlying the Maillet determinant. We showed that the matrix $A_{p,m}$ has many interesting properties, e.g. it is a Latin square, centrosymmetric, normal, and permutation-similar to a circulant matrix. Using these properties, we obtained a formula for the determinant of $A_{p,m},$ as well as the much broader collection of matrices \(A_{p}[c]\). In investigating the properties of the matrix $A_{p,m}$, the group structure of \(\ZZ_{p}\setminus\{0\}\) is used strongly. This is by no means surprising since this group is used to define $A_{p,m}$, but its ubiquitous presence (along with that of many other groups isomorphic to it) is both very interesting and something we did not expect.

Despite reporting many interesting properties of $A_{p,m}$ in this paper, there are sure to be many others that have not yet been discovered. We plan to continue investigating $A_{p,m}$, and hopefully to find more of these properties. In particular, we do not yet know whether $A_{p,m}$ is invertible for every prime $p\ge 5$ and integer $m\ge 2$, though this is the authors' belief.


\end{document}